\newtheorem{proposition}{Proposition}
\newtheorem{theorem}{Theorem}
\newtheorem*{proposition*}{Proposition}
\newtheorem*{lemma*}{Lemma}
\newtheorem*{theorem*}{Theorem}
\newtheorem*{corollary*}{Corollary}
\newtheorem{definition}{Definition}
\newtheorem*{definition*}{Definition}
\newtheorem*{remark*}{Remark}
\newtheorem*{example*}{Example}
\begin{document}
\begin{center}
\textbf{Conditions for stable range of an elementary divisor rings}
\end{center}
\vskip 0.1cm \centerline{\textbf{Zabavsky Bohdan}}

\vskip 0.3cm

\centerline{\footnotesize{Department of Algebra and Logic,  Ivan Franko National University of Lviv,  Ukraine}}
\vskip 0.5cm

\centerline{\footnotesize{August, 2015}}
\vskip 0.7cm

\footnotesize{\noindent\textbf{Abstract:} \textit{Using the concept of ring of Gelfand range 1 we proved that a commutative Bezout domain is an elementary divisor ring iff it is a
ring of Gelfand range 1. Obtained results give a solution of problem
of elementary divisor rings for different classes of commutative
Bezout domains, in particular, $PM^{*}$, local Gelfand domains and
so on.}

}

\vskip 1cm

\normalsize

The problem of diagonalization of matrices is a classic one.
Definition of an elementary divisor ring was given by I. Kaplansky
in 1949 \cite{Kaplansky}. Since any elementary divisor ring is a
Bezout ring, we obtain the question: whether an arbitrary Bezout
ring is an elementary divisor ring \cite{{Henr},{Kaplansky},{LLS}}.
Gilman and Henriksen \cite{Gillman} constructed an example of a
commutative Bezout ring which is not an elementary divisor ring. The
question: whether any commutative Bezout domain is an elementary
divisor ring is still open.

All rings considered will be commutative and have identity. A ring
is a Bezout ring if every finitely generated ideal is principal. A
commutative ring is called a $PM$ -- ring if any its prime ideal is
contained in a unique maximal one. If the latter condition holds
only for nonzero prime ideals then a ring is called a $PM^{*}$ --
ring \cite{McG}.

A commutative ring $R$ is called a Gelfand ring if for any $a,b\in
R$ such that $a+b=1$ there exist elements $x,y\in R$ such that
$(1+ax)(1+by)=0$. In the case of commutative rings the class of
Gelfand rings is the same as the class of $PM$ -- rings \cite{McG}.

1. Gelfand range 1.

\begin{definition}\label{d1}
An element $a$ of a commutative ring is called a Gelfand element if
for any elements $b$, $c$ such that $bR+cR+aR=R$, we have $a=rs$
where $rR+bR=R$, $sR+cR=R$.
\end{definition}
\begin{theorem}\label{t1}
 Let $R$ be a commutative Bezout domain. An element $a$ is a Gelfand
 element iff the factor -- ring $R/aR$ is a $PM$ -- ring.
\end{theorem}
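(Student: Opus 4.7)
The plan is to exploit the equivalence between commutative $PM$-rings and commutative Gelfand rings recalled in the introduction, so that saying $R/aR$ is a $PM$-ring amounts to: for every $\bar b,\bar c\in R/aR$ with $\bar b+\bar c=1$ there exist $\bar x,\bar y$ with $(1+\bar b\bar x)(1+\bar c\bar y)=0$. The dictionary between this and Definition~\ref{d1} is transparent: the congruence $b+c\equiv 1\pmod a$ (i.e.\ $\bar b+\bar c=1$) is equivalent to the comaximality $bR+cR+aR=R$, and $(1+bx)(1+cy)\equiv 0\pmod a$ says precisely that $a\mid(1+bx)(1+cy)$. The theorem therefore amounts to matching the statement ``$a$ splits as $rs$ with $(r,b)=1$ and $(s,c)=1$'' with the statement ``$a$ divides a suitable product $(1+bx)(1+cy)$''.

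For the direction ``$a$ Gelfand $\Rightarrow$ $R/aR$ is $PM$'' I will start from a relation $\bar b+\bar c=1$, lift it to $bR+cR+aR=R$, and invoke Definition~\ref{d1} to obtain $a=rs$ with $rR+bR=R$ and $sR+cR=R$. From relations $ru+bv=1$ and $su'+cv'=1$ one reads off $1-bv=ru\in rR$ and $1-cv'=su'\in sR$, whose product lies in $rsR=aR$; reducing modulo $a$ gives the required identity. This half is purely formal and does not use that $R$ is a domain.

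For the converse ``$R/aR$ is $PM$ $\Rightarrow$ $a$ is Gelfand'' I will take $b,c$ with $bR+cR+aR=R$, write $bu+cv+aw=1$, and apply the Gelfand-ring property of $R/aR$ to the classes of $bu$ and $cv$. This furnishes $x,y$ with $a\mid pq$, where $p:=1+bux$ and $q:=1+cvy$; by construction $pR+bR=R$ and $qR+cR=R$. The remaining step is to split $a=rs$ so that $r$ inherits coprimality with $b$ and $s$ with $c$. Here I will use that a commutative Bezout domain is a GCD domain: set $r:=\gcd(a,p)$ and $s:=a/r$, write $a=ra_1$, $p=rp_1$ with $\gcd(a_1,p_1)=1$, and derive $s=a_1\mid q$ from $a\mid pq$. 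Then $(r,b)=1$ follows from $(p,b)=1$ by substituting $p=rp_1$ into a relation $\alpha p+\beta b=1$, and $(s,c)=1$ follows from $s\mid q$ combined with $(q,c)=1$.

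The main obstacle, as well as the conceptual heart of the argument, is this final GCD-splitting: it is the only place where the Bezout-domain hypothesis is genuinely used, and both directions of the theorem really boil down to the duality between factoring $a$ and finding a product divisible by $a$. The rest is book-keeping through Definition~\ref{d1} and the Gelfand-ring condition.
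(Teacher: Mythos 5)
Your proposal is correct and follows essentially the same route as the paper: the forward direction is the same formal translation between $a=rs$ and a product $(1+bx)(1+cy)$ divisible by $a$, and your converse — setting $r=\gcd(a,p)$, writing $a=ra_1$ with $a_1$ coprime to $p_1$, and pushing divisibility onto the other factor to get $a_1\mid q$ — is exactly the paper's splitting via $rR+aR=r_1R$, $a=r_1a_0$, $a_0\mid s$, only with different letters and with the Gelfand condition taken in its literal $(1+\bar b\bar x)(1+\bar c\bar y)=0$ form rather than the equivalent comaximal zero-product form the paper uses.
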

\begin{proof}
Let $\overline{R}=R/aR$ and let $a$ be a Gelfand element. We are
going to prove that $\overline{R}$ is a $PM$ -- ring. Let
$\overline{b}=b+aR$ and $\overline{c}=c+aR$ be any elements in
$\overline{R}$ and $bR+cR+aR=R$. Let
$\overline{b}\overline{R}+\overline{c}\overline{R}=\overline{R}$ and
by the definition of $a$, we have: $a=rs$, where $rR+bR=R$,
$sR+cR=R$. Let $\overline{r}=r+aR$, $\overline{s}=s+aR$, then
$\overline{0}=\overline{r}\overline{s}$,
$\overline{r}\overline{R}=\overline{b}\overline{R}=\overline{R}$ and
$\overline{s}\overline{R}+\overline{c}\overline{R}=\overline{R}$, i.
e. $\overline{R}$ is a Gelfand ring and hence $\overline{R}$ is a
$PM$ -- ring.

If $\overline{R}$ is a $PM$ -- ring then $\overline{R}$ is a Gelfand
ring, i. e. for any $\overline{b}, \overline{c}\in\overline{R}$ such
that
$\overline{b}\overline{R}+\overline{c}\overline{R}=\overline{R}$ we
have $\overline{0}=\overline{r}\overline{s}$, where
$\overline{r}\overline{R}+\overline{b}\overline{R}=\overline{R}$ and
$\overline{s}\overline{R}+\overline{c}\overline{R}=\overline{R}$.
Since $\overline{0}=0+aR=\overline{r}\overline{s}$, we have $rs\in
aR$, where $\overline{r}=r+aR$, $\overline{s}=s+aR$. Let
$rR+aR=r_{1}R$. Then $r=r_{1}r_{0}$, $a=r_{1}a_{0}$, where
$r_{0}R+a_{0}R=R$ \cite{Zabavsk}. Since $r_{0}R+a_{0}R=R$, we have
$r_{0}u+a_{0}v=1$ for some elements $u,v\in R$. Since $rs\in aR$, we
have $rs=at$ for some element $t\in R$. Then
$r_{1}r_{0}s=r_{1}a_{0}t$. Since $R$ is a domain, we have
$r_{0}s=a_{0}t$. Since $r_{0}u+a_{0}v=1$, we have
$a_{0}(tu+a_{0}v)=s$. Therefore, $a=r_{1}a_{1}$ and, since
$\overline{r}\overline{R}+\overline{b}\overline{R}=\overline{R}$, we
have
$\overline{r_{1}}\overline{R}+\overline{b}\overline{R}=\overline{R}$.
Then
$$R=r_{1}R+bR+aR=r_{1}R+bR+r_{1}r_{0}sR$$
and we have $r_{1}R+bR=R$. Since $a_{0}(tu+a_{0}v)=s$ and
$\overline{s}\overline{R}+\overline{c}\overline{R}=\overline{R}$
then
$\overline{a_{0}}\overline{R}+\overline{c}\overline{R}=\overline{R}$.
Then
$$R=a_{0}R+cR+aR=a_{0}R+cR+a_{0}r_{1}R=a_{0}R+cR.$$
\end{proof}

We have an obvious result

\begin{proposition}\label{p1}
 For a commutative Bezout domain $R$ the following conditions are
 equivalents:

 1) $a$ is a Gelfand element;

 2) for any prime ideal $P$ of $R$ such that $a\in P$ there exists a
 unique maximal ideal $M$ such that $P\in M$.
\end{proposition}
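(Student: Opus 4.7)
The plan is to reduce the proposition directly to Theorem \ref{t1} via the standard ideal correspondence for the quotient $\overline{R} = R/aR$. Since Theorem \ref{t1} already tells us that $a$ is a Gelfand element iff $\overline{R}$ is a $PM$-ring, and since a commutative ring is a $PM$-ring precisely when every prime ideal is contained in a unique maximal ideal, it suffices to translate the $PM$ property of $\overline{R}$ back to an intrinsic statement about primes of $R$ containing $a$.

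First I would recall the standard bijection between ideals of $\overline{R}$ and ideals of $R$ containing $aR$: every prime $\overline{P}$ of $\overline{R}$ has the form $P/aR$ for a unique prime $P \supseteq aR$ of $R$, and likewise for maximal ideals. Crucially, this bijection is inclusion-preserving, so $\overline{P} \subseteq \overline{M}$ in $\overline{R}$ iff $P \subseteq M$ in $R$. Observing that $P \supseteq aR$ is exactly the condition $a \in P$, the set of primes of $\overline{R}$ is in inclusion-preserving bijection with the set of primes of $R$ containing $a$, and similarly for maximal ideals.

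Combining these, $\overline{R}$ is a $PM$-ring iff every prime $\overline{P}$ of $\overline{R}$ lies in a unique maximal ideal of $\overline{R}$, which via the correspondence is equivalent to saying that every prime $P$ of $R$ with $a \in P$ is contained in a unique maximal ideal $M$ of $R$. This is exactly condition (2). Chaining with Theorem \ref{t1} then yields the equivalence of (1) and (2).

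I do not anticipate a genuine obstacle: the only delicate point is to note that the correspondence preserves both primality and maximality (so no extra maximal ideals of $R$ can appear that fail to contain $a$, and no maximal ideals of $\overline{R}$ are missed), but this is a routine application of the fourth isomorphism theorem. This is why the authors label the result ``obvious.''
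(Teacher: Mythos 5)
Your proposal is correct and is essentially the paper's own argument: the paper likewise deduces the proposition from Theorem \ref{t1} together with the inclusion-preserving correspondence between prime (and maximal) ideals of $R/aR$ and those of $R$ containing $a$. You simply spell out the details that the paper dismisses as ``obvious.''
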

\begin{proof}
This is obvious, since $\overline{P}$ is a prime ideal of $R/aR$ iff
there exists a prime ideal $P$ of $R$ such that $aR\subset P$ and
$\overline{P}=P/aR$.
\end{proof}

\begin{definition}\label{d2}
Let $R$ be a commutative Bezout domain. We say that $R$ is a ring of
Gelfand range 1 if for any $a,b\in R$ such that $aR+bR=R$ there
exists an element $t\in R$ such that $a+bt$ is a Gelfand element.
\end{definition}

Since any unit element is a Gelfand element, we obtain the following
result.

\begin{proposition}\label{p3}
 A commutative Bezout domain of stable range 1 is a ring of Gelfand
 range 1.
\end{proposition}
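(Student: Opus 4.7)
The plan is to unfold both definitions and use stable range 1 directly. Fix $a,b\in R$ with $aR+bR=R$. By the definition of stable range 1, there is some $t\in R$ for which $u:=a+bt$ is a unit of $R$. To establish Gelfand range 1, it then suffices to verify the parenthetical remark preceding the proposition: every unit is a Gelfand element.

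For this verification I would argue directly from Definition~\ref{d1}. Let $u\in R$ be a unit and let $b,c\in R$ satisfy $bR+cR+uR=R$ (this holds automatically since $uR=R$). I must factor $u=rs$ with $rR+bR=R$ and $sR+cR=R$. Simply take $r=u$ and $s=1$. Then $rR+bR=uR+bR=R$ because $u$ is a unit, and $sR+cR=R+cR=R$ trivially. Hence $u$ is a Gelfand element. (Alternatively, one may invoke Theorem~\ref{t1}: if $u$ is a unit then $R/uR$ is the zero ring, which is vacuously a $PM$-ring, so $u$ is Gelfand.)

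Combining the two steps, for any coprime pair $a,b$ the element $a+bt$ produced by stable range 1 is a unit, hence a Gelfand element, which is precisely the condition that $R$ has Gelfand range 1. There is no real obstacle here; the proposition is essentially a definitional consequence once one observes that units are Gelfand elements, which is why the authors label the statement as obvious.
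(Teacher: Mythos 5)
Your proof is correct and follows exactly the paper's route: the paper justifies the proposition with the single remark that every unit is a Gelfand element, and you supply the easy verification of that remark (writing $u = u\cdot 1$ with $uR+bR=R$ and $1\cdot R + cR = R$) before applying stable range~1. Nothing further is needed.
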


Recall that a ring $R$ is said to be a ring of stable range 1 if for
any $a,b\in R$ such that $aR+bR=R$ there exists an element $t\in R$
such that $(a+bt)R=R$ \cite{Zabavsk}.

\begin{definition}\cite{{Zab21},{Zab22}}\label{d3}
Let $R$ be a commutative Bezout domain. An element $a\in R$ is said
to be an avoidable element if for any elements $c,b\in R$ such that
$aR+bR+cR=R$, we have $a=rs$ where $rR+bR=R$, $sR+cR=R$ and
$rR+sR=R$.
\end{definition}

Since any avoidable element is obviously a Gelfand element and by
\cite{{Zab21},{Zab22}} we have the following result.

\begin{theorem}\label{t2}
An elementary divisor domain is a ring of Gelfand range 1.
\end{theorem}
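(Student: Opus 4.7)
The plan is to combine two facts assembled right before the theorem: any avoidable element is a Gelfand element, and, by the cited papers \cite{Zab21,Zab22}, an elementary divisor domain has the stronger property that every coprime pair $a,b$ can be ``adjusted'' to an avoidable element $a+bt$. Feeding the second into the first will immediately realize the definition of Gelfand range 1.

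First I would record the trivial implication avoidable $\Rightarrow$ Gelfand: if $a$ is avoidable and $bR+cR+aR=R$, then by Definition~\ref{d3} we already get $a=rs$ with $rR+bR=R$ and $sR+cR=R$ (the extra condition $rR+sR=R$ is simply discarded), which matches Definition~\ref{d1} verbatim. This makes it enough to exhibit, for each coprime pair $a,b\in R$, some $t$ with $a+bt$ avoidable.

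Next I would invoke the relevant statement from \cite{Zab21,Zab22}: an elementary divisor domain is a ring of ``avoidable range 1,'' i.e.\ for any $a,b\in R$ with $aR+bR=R$ there exists $t\in R$ such that $a+bt$ is avoidable. This is the substantive ingredient; it rests on the ability to reduce a suitably chosen $2\times 2$ matrix to diagonal form, which is the defining property of an elementary divisor ring, and on extracting from the Smith form a factorization $a+bt=rs$ with the three pairwise comaximality conditions of Definition~\ref{d3}.

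Combining the two steps: given coprime $a,b$ in an elementary divisor domain $R$, choose $t$ from the cited result so that $a+bt$ is avoidable, and hence Gelfand by the first paragraph; this is exactly the condition in Definition~\ref{d2}, so $R$ is a ring of Gelfand range 1. The only real obstacle is entirely external to this excerpt, namely the avoidable range 1 theorem that is being cited; here its use is completely formal, and the theorem collapses into two short lines once that input is granted.
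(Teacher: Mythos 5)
Your proposal matches the paper's own argument exactly: the paper likewise observes that any avoidable element is a Gelfand element (Definition~\ref{d3} simply adds the condition $rR+sR=R$ to Definition~\ref{d1}) and then cites \cite{Zab21,Zab22} for the fact that in an elementary divisor domain every coprime pair $a,b$ admits $t$ with $a+bt$ avoidable. Your write-up is correct and, if anything, spells out the two-step combination more explicitly than the paper does.
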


Furthemore we have the following result.

\begin{theorem}\label{t3}
Let $R$ be a commutative Bezout domain of Gelfand range 1. Then $R$
is an elementary divisor ring.
\end{theorem}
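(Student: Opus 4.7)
The plan is to verify a known sufficient condition for $R$ to be an elementary divisor ring: for a commutative Bezout domain it suffices to show that every $2\times 2$ matrix of the form $\bigl(\begin{smallmatrix}a & 0\\ b & c\end{smallmatrix}\bigr)$ with $aR+bR+cR=R$ admits a Smith normal form (equivalently, by Zabavsky \cite{Zab21,Zab22}, that every element of $R$ is avoidable in the sense of Definition~\ref{d3}). Either way, the task reduces to a condition on coprime triples $(a,b,c)$, and the role of the Gelfand range~1 hypothesis is to produce a usable factorization via Definition~\ref{d1}.

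Fix $a\in R$ and $b,c\in R$ with $aR+bR+cR=R$. Since $R$ is Bezout, let $dR=bR+cR$, so $b=db_1$, $c=dc_1$ with $b_1R+c_1R=R$ and $aR+dR=R$. Apply Gelfand range~1 to the coprime pair $(a,d)$: there exists $t\in R$ such that $\alpha:=a+dt$ is a Gelfand element. Writing $dt=bu+cv$, we have $\alpha=a+bu+cv$, so passing from $a$ to $\alpha$ is realized by reversible elementary row operations on the matrix $\bigl(\begin{smallmatrix}a & 0\\ b & c\end{smallmatrix}\bigr)$; establishing a Smith form for the shifted matrix will thus suffice.

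The triple $(\alpha,b_1,c_1)$ satisfies $\alpha R+b_1R+c_1R=R$ (since $\alpha R+dR=R$ and $b_1R+c_1R=R$), so Definition~\ref{d1} applied to the Gelfand element $\alpha$ yields $\alpha=r_0s_0$ with $r_0R+b_1R=R$ and $s_0R+c_1R=R$. Both factors divide $\alpha$, which is coprime to $d$, so each is individually coprime to $d$; hence $r_0R+bR=R$ and $s_0R+cR=R$. The third coprimality $r_0R+s_0R=R$ required for avoidability can be arranged by a Bezout gcd-decomposition: let $e=\gcd(r_0,s_0)$, write $r_0=er_1$, $s_0=es_1$ with $r_1R+s_1R=R$, and note that $e$ (as a divisor of $\alpha$) is coprime to $d$ and hence to $b$ and $c$, so $e$ can be absorbed into a single factor while preserving all previously established coprimalities.

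The principal obstacle is transferring this factorization of $\alpha=a+dt$ into the required statement about $a$. The cleanest route is at the matrix level: using the Bezout identities for the coprime pairs $(r_0,b)$, $(s_0,c)$, and $(r_0,s_0)$, one assembles invertible matrices $P,Q$ that diagonalize $\bigl(\begin{smallmatrix}\alpha & 0\\ b & c\end{smallmatrix}\bigr)$, and then composes with the inverses of the elementary operations realizing $a\mapsto\alpha$ to obtain a Smith form of the original matrix $\bigl(\begin{smallmatrix}a & 0\\ b & c\end{smallmatrix}\bigr)$. Carrying out this matrix-algebraic bookkeeping carefully --- in particular ensuring the divisibility condition $d_1\mid d_2$ on the diagonal entries, which is the genuinely delicate part of the Smith form --- is the heart of the proof.
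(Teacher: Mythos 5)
There are two genuine problems with your plan, and both occur before the ``bookkeeping'' you defer to the end. First, the shift $a\mapsto\alpha=a+bu+cv$ of the \emph{diagonal} entry is not realized by invertible row and column operations: the matrix $\bigl(\begin{smallmatrix}\alpha & 0\\ b & c\end{smallmatrix}\bigr)$ has determinant $(a+bu+cv)c$, while $A$ has determinant $ac$, so the two are not equivalent in general and nothing about the shifted matrix transfers back to $A$. The paper avoids this by shifting the \emph{off-diagonal} entry instead: writing $ax+by+cz=1$ one has $bR+(ax+cz)R=R$, and Gelfand range~1 produces $t$ with $d=b+(ax+cz)t$ a Gelfand element; the explicit conjugation $\bigl(\begin{smallmatrix}1&0\\ xt&1\end{smallmatrix}\bigr)A\bigl(\begin{smallmatrix}1&0\\ zt&1\end{smallmatrix}\bigr)=\bigl(\begin{smallmatrix}a&0\\ d&c\end{smallmatrix}\bigr)$ keeps the matrix lower triangular and the determinant unchanged. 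So the Gelfand element must sit in the $b$-position, not the $a$-position, for the reduction to be legitimate.

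Second, your upgrade of the Gelfand factorization to an avoidable one ($r'R+s'R=R$) by ``absorbing'' $e=\gcd(r_0,s_0)$ does not work: writing $r_0=er_1$, $s_0=es_1$ gives $\alpha=e^2r_1s_1$, and $e$ may share non-unit divisors with both $r_1$ and $s_1$ (already $r_0=12$, $s_0=18$ shows $e^2r_1$ and $s_1$ need not be coprime), so no single factor can absorb $e^2$ while keeping the factors coprime in a general Bezout domain. This is not a repairable detail: if every Gelfand element were avoidable, the theorem would follow at once from the cited characterization in \cite{Zab22}, and the entire point of the paper is to get by with the strictly weaker Gelfand condition. The paper's route is to verify Kaplansky's criterion directly from the weak factorization $d=rs$, $rR+aR=R$, $sR+cR=R$: choose $p,k$ with $sp+ck=1$, so $dp+cq=r$ with $q=rk$ is coprime to $a$; then extract $\delta=\gcd(p,q)$, $p=p_1\delta$, $q=q_1\delta$, and check $ap_1R+(dp_1+cq_1)R=R$ using $p_1R+cR=R$ and $p_1R+q_1R=R$. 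No coprimality of $r$ and $s$ is ever needed. I would redo the proof along these lines.
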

\begin{proof}
Let $ A=\begin{pmatrix}
a & 0 \\
b &c
\end{pmatrix}
$ with $aR+bR+cR=R$. By \cite{{Kaplansky},{Zabavsk}} for the proof
of the theorem it is enough to prove that $A$ admits a diagonal
reduction. Write $ax+by+cz=1$ for some elements $x,t,z\in R$. Then
$bR+(ax+cz)R=R$. Since $R$ is a Gelfand range 1, we see that there
exists some element $t\in R$ such that $d=b+(ax+cy)t$ is a Gelfand
element. We have
$$
\begin{pmatrix}
1 & 0 \\
xt &1
\end{pmatrix}
\begin{pmatrix}
a & 0 \\
b &c
\end{pmatrix}\begin{pmatrix}
1 & 0 \\
zt &1
\end{pmatrix}=\begin{pmatrix}
a & 0 \\
d &c
\end{pmatrix}
$$
where, obviously, $aR+dR+cR=R$ and $d$ is a Gelfand element.
According to the restrictions on $d$ and by Proposition \ref{p2} we
have $d=rs$ where $rR+aR=R$, $sR+cR=R$. Let $p\in R$ be an element
such that $sp+ck=1$ for some element $k\in R$. Hence $rsp+rck=r$ and
$dp+crk=r$. Denoting $rk=q$, we obtain $(dp+cq)R+aR=R$. Let
$pR+qR=\delta R$, then $p=p_{1}\delta$, $q=q_{1}\delta$ and
$\delta=pu+qv$ with $p_{1}R+q_{1}R=R$. Hence $pR\subset p_{1}R$ and
$pR+cR=R$, we have $p_{1}R+cR=R$. Since $p_{1}R+q_{1}R=R$ we have
$p_{1}R+(p_{1}d+q_{1}c)R=R$. Since $dp+cq=\delta(dp_{1}+cq_{1})$ and
$(dp+cq)R+aR=R$ we obtain $(dp_{1}+cq_{1})R+aR=R$. As well as
$p_{1}R+(p_{1}d+q_{1}c)R=R$, finally we have
$ap_{1}R+(dp_{1}+cq_{1})R=R$. By \cite{Kaplansky}, the matrix
$
\begin{pmatrix}
a & 0 \\
d &c
\end{pmatrix}
$ admits a diagonal reduction. Then, obviously, the matrix $A$
admits a diagonal reduction.
\end{proof}

Since by Theorem \ref{t1} and Proposition \ref{p1} a commutative
Bezout $PM^{*}$ -- domain is a domain of Gelfand range 1, as a
consequence of Theorem \ref{t3} we have the following result which
is answer to open questions raised in \cite{{LLS},{ZabMatSud}}.

\begin{theorem}\cite{Luhansk}\label{t4}
A commutative Bezout domain in which any nonzero prime ideal
contained in a unique maximal ideal is an elementary divisor domain.
\end{theorem}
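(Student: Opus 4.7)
The plan is to deduce Theorem \ref{t4} directly from Theorem \ref{t3} by showing that the $PM^{*}$ hypothesis on $R$ forces $R$ to be of Gelfand range 1. In view of Theorem \ref{t3}, this will complete the proof. The combinatorial work has essentially already been done in Proposition \ref{p1}, which characterizes Gelfand elements in terms of the containment behaviour of prime ideals.

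First I would fix coprime elements $a,b \in R$ with $aR+bR=R$, and look for some $t \in R$ such that $a+bt$ is a Gelfand element. The key observation is that in a $PM^{*}$-domain every \emph{nonzero} prime ideal is contained in a unique maximal ideal, so by Proposition \ref{p1} any nonzero element of $R$ is automatically a Gelfand element: every prime $P$ containing a nonzero element is itself nonzero, hence is contained in a unique maximal ideal. Therefore the only thing to verify is that $t$ can be chosen so that $a+bt \neq 0$.

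This last point is essentially automatic: since $aR+bR=R$, the elements $a$ and $b$ are not both zero. If $a \neq 0$, take $t=0$; if $a=0$, then $b$ is a unit and $t=1$ gives $a+bt=b \neq 0$. Either way, $a+bt$ is a nonzero element of $R$, hence a Gelfand element by the argument above. This shows $R$ is of Gelfand range 1 in the sense of Definition \ref{d2}, and Theorem \ref{t3} then yields that $R$ is an elementary divisor ring.

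The main obstacle, such as it is, lies not in any calculation but in recognising that the $PM^{*}$ condition translates cleanly through Proposition \ref{p1} into a statement about Gelfand elements. Once one sees that the distinguished role of zero in the $PM^{*}$ definition is harmless (because we have the freedom to perturb $a$ by a multiple of $b$), the theorem follows almost formally from Theorem \ref{t3}. No further structural input about Bezout domains is required beyond what is already packaged into Theorems \ref{t1} and \ref{t3}.
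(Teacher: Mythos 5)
Your proposal is correct and follows exactly the route the paper takes: the paper derives Theorem \ref{t4} by observing that Theorem \ref{t1} and Proposition \ref{p1} make a commutative Bezout $PM^{*}$-domain a domain of Gelfand range 1, and then invokes Theorem \ref{t3}. Your write-up merely fills in the (easy) details the paper leaves implicit, namely that every nonzero element is Gelfand under the $PM^{*}$ hypothesis and that $a+bt$ can always be arranged to be nonzero.
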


\begin{definition}\label{d4}
A commutative Bezout ring is a local Gelfand ring if for any
elements $a,b\in R$ such that $aR+bR=R$ at least one of the elements
$a$, $b$ is a Gelfand element.
\end{definition}

An obvious example \cite{Luhansk} of a local Gelfand ring is the
Henriksen example \cite{Henr}
$$R=\{z_0 + a_1 x + a_2 x^2+...+a_n
x^{n}+...| z_0 \in \mathbb{Z}, a_i \in \mathbb{Q}\}.$$

Note that the Henriksen example is not a commutative Bezout domain
in which any nonzero prime ideal is contained in a unique maximal
ideal \cite{Henr}.

\begin{proposition}\label{p4}
 A local Gelfand ring  is a ring of Gelfand
 range 1.
\end{proposition}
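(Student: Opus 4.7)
The plan is to reduce the Gelfand-range-$1$ condition to a single application of the local Gelfand hypothesis, by noting that the candidates $t = 0$ and $t = 1$ already produce a coprime pair of values for $a + bt$. Given $a, b \in R$ with $aR + bR = R$, these two candidates yield the elements $a$ and $a + b$, respectively, so it suffices to show that at least one of $a$ or $a+b$ is a Gelfand element.

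First I would verify that $aR + (a+b)R = R$. Since $(a+b) - a = b$, the ideal on the left contains $b$, and hence equals $aR + bR = R$. Then I would apply the defining property of a local Gelfand ring to the coprime pair $(a,\, a+b)$, which forces at least one of $a$ or $a+b$ to be a Gelfand element. If $a$ is Gelfand, take $t = 0$; if $a+b$ is Gelfand, take $t = 1$. In either case $a + bt$ is a Gelfand element, so by Definition \ref{d2} the ring $R$ is of Gelfand range $1$.

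The hard part, if anything, is simply spotting the right auxiliary pair; once one observes that $\{a,\, a+b\}$ generates the same unit ideal as $\{a,b\}$, the proposition follows immediately from the hypothesis, and no further structural property of $R$ is invoked in the argument.
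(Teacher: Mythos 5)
Your proof is correct and is essentially the paper's own argument: both hinge on observing that $aR+(a+b)R=R$ and then applying the local Gelfand hypothesis to the pair $(a,\,a+b)$, taking $t=0$ or $t=1$ accordingly. No further comment is needed.
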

\begin{proof}
Let $aR+bR=R$. There are two cases:

1) $a$ is a Gelfand element. Then $a+b0$ is a Gelfand element.

2) $a$ is not a Gelfand element. Since $aR+(a+b)R=R$ and $a$ is not
a Gelfand element, we see that $a+b1$ is a Gelfand element.
Therefore, $R$ is Gelfand range 1.
\end{proof}

By Proposition \ref{p4} and Theorem \ref{t3} we obtain the following
result.

\begin{theorem}\cite{Carp}\label{t5}
A local Gelfand ring is an elementary divisor ring.
\end{theorem}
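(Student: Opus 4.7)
The plan is to chain the two immediately preceding results, since the machinery has now been set up so that this theorem is essentially a formal consequence.

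First I would observe that the hypothesis of Proposition \ref{p4} is exactly the definition of a local Gelfand ring, so Proposition \ref{p4} applies directly and yields that any local Gelfand ring $R$ is a ring of Gelfand range $1$ in the sense of Definition \ref{d2}. Concretely, given $a, b \in R$ with $aR + bR = R$, one of $a, a+b$ is a Gelfand element, so there is $t \in \{0, 1\}$ with $a + bt$ Gelfand; this is the content I would quote.

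Next, I would apply Theorem \ref{t3} to $R$: it asserts that a commutative Bezout domain of Gelfand range $1$ is an elementary divisor ring. The previous step supplies the Gelfand range $1$ hypothesis, and the underlying Bezout domain hypothesis is part of the standing assumption on a (local) Gelfand ring in Definition \ref{d4}. Combining these two applications gives the conclusion.

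The only mildly non-trivial point to flag is the verification that Definition \ref{d4} is compatible with the domain hypothesis required by Theorem \ref{t3}; once this is acknowledged, there is no genuine obstacle. I do not expect any real calculation in the proof, since Theorem \ref{t3} has already absorbed the matrix-reduction argument and Proposition \ref{p4} has absorbed the dichotomy between \textquotedblleft$a$ Gelfand\textquotedblright\ and \textquotedblleft$a+b$ Gelfand,\textquotedblright\ so the whole proof reduces to two citations.
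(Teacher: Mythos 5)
Your proposal is correct and is precisely the paper's argument: the author derives Theorem \ref{t5} by combining Proposition \ref{p4} (a local Gelfand ring is of Gelfand range 1) with Theorem \ref{t3}, exactly as you do. Your remark about reconciling the Bezout ring/domain hypotheses is a fair observation about the paper's phrasing, but it does not change the substance of the argument.
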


2. Element of almost stable range 1.

\begin{definition}\label{d5}
Let $R$ be a commutative Bezout ring. An element $a$ is said to be
an element of almost stable range 1 if the factor -- ring $R/aR$ is
a ring of stable range 1.
\end{definition}

Set $S(R)=\{a| a \text{ is an element of almost stable range 1}\}$.
Obviously, $S(R)\neq\emptyset$, because $1\in S(R)$.

\begin{proposition}\cite{Shubani}\label{p5}
The $S(R)$ is saturated and multiplicatively closed.
\end{proposition}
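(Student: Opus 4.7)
The plan is to handle saturation and multiplicative closure separately; the second assertion carries the main content.

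For saturation, suppose $ab\in S(R)$. Since $abR\subseteq aR$, the natural map $R/abR\twoheadrightarrow R/aR$ is surjective, so it suffices to observe that stable range~$1$ descends along ring surjections. A unimodular pair $\bar u,\bar v$ in $R/aR$ lifts to a unimodular triple $(u,v,k)$ in $R/abR$ whose third coordinate lies in the kernel; the standard upgrade of stable range~$1$ to unimodular triples (an immediate consequence of the two-element definition) produces $t_1,t_2$ with $u+vt_1+kt_2$ a unit in $R/abR$, and projecting to $R/aR$ kills the $kt_2$ term. Hence $a\in S(R)$; by symmetry $b\in S(R)$.

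For multiplicative closure, let $a,b\in S(R)$ and take $u,v\in R$ with $uR+vR+abR=R$. The plan is a two-stage correction of~$u$ in which the second correction is forced to lie in $bR$, so as not to spoil the first. Since $abR\subseteq bR$, stable range~$1$ of $R/bR$ supplies $t_2$ with $u':=u+vt_2$ coprime to $b$. Multiplying the two Bezout identities coming from $u'R+vR+aR=R$ and $u'R+bR=R$ termwise then yields the auxiliary relation $u'R+vbR+aR=R$, and stable range~$1$ of $R/aR$ applied to it produces $t''$ with $(u'+vbt'')R+aR=R$. Setting $t:=t_2+bt''$, the element $u+vt=u'+vbt''$ is coprime to $a$ by construction and remains coprime to $b$ because $u+vt\equiv u'\pmod{b}$. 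A final product of the two resulting Bezout identities exhibits $1\in(u+vt)R+abR$, so $ab\in S(R)$.

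The main obstacle is precisely the interaction of these two stages: an unrestricted second correction would in general destroy the coprimality modulo $b$ achieved in the first. The two ingredients that make things fit are the auxiliary relation $u'R+vbR+aR=R$, which licenses the use of stable range~$1$ of $R/aR$ with $vb$ in place of $v$, and the absorption of a factor of $b$ into the correction, which turns coprimality modulo $b$ into a syntactic identity. Nothing specifically Bezout appears to enter, so the statement should hold for any commutative ring.
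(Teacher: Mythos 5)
Your proof is correct, and for the multiplicative-closure half it takes a genuinely different route from the paper's. The paper argues structurally: it quotes a result to the effect that $R/(aR\cap bR)$ has stable range $1$ when $R/aR$ and $R/bR$ do, identifies $R/(aR\cap bR)$ with $(R/abR)/\bigl((aR\cap bR)/abR\bigr)$, observes that $(aR\cap bR)/abR$ lies in the Jacobson radical of $R/abR$ (any maximal ideal containing $ab$ contains $a$ or $b$, hence contains $aR\cap bR$), and invokes the fact that stable range $1$ lifts modulo ideals contained in the radical. Your two-stage correction --- first making $u+vt_2$ coprime to $b$, then correcting by a multiple of $vb$ so as to achieve coprimality to $a$ without disturbing coprimality to $b$ --- replaces all of that machinery with a self-contained element computation; the auxiliary relation $u'R+vbR+aR=R$ obtained by multiplying Bezout identities is exactly the right pivot, and the final step $\bigl((u+vt)R+aR=R\bigr)\wedge\bigl((u+vt)R+bR=R\bigr)\Rightarrow (u+vt)R+abR=R$ closes the loop. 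Your observation that nothing Bezout-specific enters is also accurate, so your version is slightly more general and avoids the external citations. For saturation both arguments are essentially the same reduction to the fact that stable range $1$ passes to quotients; you merely spell out the lifting of a unimodular pair to a unimodular triple, which is needed because the pair $(u,v)$ itself need not be unimodular modulo $abR$.
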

\begin{proof}
Let $a,b\in S$. By \cite{HC01}, stable range of $R/(aR\cap bR)$ is
equal to 1. Since $$R/(aR\cap bR)\cong R/(ab)R/(aR\cap bR)/(ab)R,$$
we see that stable range of $R/(ab)R/(aR\cap bR)/(ab)R$ is equal to
1. Obviously, $(aR\cap bR)/(ab)R\subset J(R/(ab)R)$. This implies
that stable range of $R/(ab)R$ is equal to 1. Hence $ab\in S(R)$.

Assume that $a=bc$ with $a\in S$. Since $aR\subset bR$ we see that
$$
R/bR\cong R/aR/bR/aR
$$
whence stable range of $R/bR$ is equal 1. Thus $b\in S(R)$.
\end{proof}

Let $R$ be a commutative Bezout domain. Since $S(R)$ is a saturated
multiplicatively closed set, we can consider a localization of $R$
with respect to the set $S$, i. e. the ring of fractions $R_S$.

\begin{theorem}\label{t6}
A commutative Bezout domain is an elementary divisor ring if and
only if $R_S$ is an elementary divisor ring.
\end{theorem}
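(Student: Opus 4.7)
\medskip
\noindent\textit{Proof plan.}
My strategy is to pass between $R$ and $R_S$ via the Gelfand-range-$1$ characterization of elementary divisor rings (Theorems~\ref{t2} and~\ref{t3}) together with the prime-ideal criterion of Proposition~\ref{p1}. Two preliminary facts are central: first, $R_S$ is itself a commutative Bezout domain because localizations of Bezout domains are Bezout; second, every $s \in S$ is a Gelfand element of $R$, since $R/sR$ has stable range $1$, which makes it a $PM$--ring, and Theorem~\ref{t1} then applies. Consequently, it suffices to show that $R$ has Gelfand range $1$ if and only if $R_S$ does.

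For the direction $R \Rightarrow R_S$, I would take a coprime pair $\alpha, \beta \in R_S$, clear denominators using units of $R_S$, and extract their greatest common divisor in $R$ (a unit of $R_S$, hence in $S$ by saturation, Proposition~\ref{p5}) to reduce to a coprime pair $a, b \in R$. Gelfand range $1$ of $R$ supplies $t \in R$ with $a + bt$ Gelfand in $R$. To transfer Gelfand-ness to $R_S$, I would apply Proposition~\ref{p1} together with the standard correspondence between primes of $R_S$ and primes of $R$ disjoint from $S$: a prime of $R_S$ above $a + bt$ arises from some $P \subseteq R$ with $a + bt \in P$ and $P \cap S = \emptyset$; by hypothesis $P$ lies in a unique maximal ideal $M$ of $R$, and since $R_M$ is a valuation domain with linearly ordered primes, the $S$-disjoint primes above $P$ form a down-closed subchain whose union is itself an $S$-disjoint prime, giving the required unique maximal ideal of $R_S$ above the original.

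For the converse $R_S \Rightarrow R$, I would take coprime $a, b \in R$ and use Gelfand range $1$ of $R_S$ to obtain $\tau = t_0/s_0 \in R_S$ with $a + b\tau$ Gelfand in $R_S$. Setting $e := s_0 a + b t_0 \in R$, the element $e = s_0(a + b\tau)$ is Gelfand in $R_S$ (since $s_0$ is a unit of $R_S$). The goal is to upgrade this to an honest $t' \in R$ such that $a + b t'$ is Gelfand in $R$. For a prime $P \subseteq R$ above the target, two cases arise: if $P \cap S \neq \emptyset$, then $P$ contains a Gelfand element of $S$ and hence lies in a unique maximal ideal of $R$; if $P \cap S = \emptyset$, the Gelfand-ness of $e$ in $R_S$ combined with the valuation-chain argument of the previous paragraph controls $P$. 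Using a Bezout identity $au + bv = 1$ in $R$ together with the saturation and multiplicativity of $S$ (Proposition~\ref{p5}), I would choose $t'$ so that the shift from $e = s_0 a + b t_0$ to $a + b t'$ is absorbed by an $S$-factor without losing the Gelfand property.

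The principal obstacle is precisely this last translation: the hypothesis on $R_S$ directly controls only primes of $R$ disjoint from $S$, whereas Definition~\ref{d2} demands an element of the form $a + b t'$ whose Gelfand-ness in $R$ is sensitive to every maximal ideal of $R$, including those meeting $S$. Bridging this gap -- by trading $s_0$-factors against the Gelfand-ness of each $s \in S$ in $R$ -- is where the argument must work hardest.
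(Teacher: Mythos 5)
Your reduction of the theorem to the statement ``$R$ has Gelfand range $1$ iff $R_S$ has Gelfand range $1$'' is legitimate in principle (Theorems \ref{t2} and \ref{t3} give the equivalence, and $R_S$ is again a commutative Bezout domain), but the argument rests on a preliminary fact that is false: stable range $1$ does \emph{not} imply the $PM$ (Gelfand) property, so an element $s\in S$ need not be a Gelfand element of $R$. Any semilocal domain with two or more maximal ideals, e.g.\ $D=\mathbb{Z}_{(2)}\cap\mathbb{Z}_{(3)}$, has stable range $1$ while its zero prime lies in two maximal ideals. Nor is this worry vacuous for quotients $R/sR$: in the Henriksen-type Bezout domain $R=D+xK[[x]]$ with $K=\mathbb{Q}$ the quotient field of $D$, one has $\sqrt{xR}=xK[[x]]=P$ and $R/P\cong D$; since the nilradical is contained in the Jacobson radical, $R/xR$ inherits stable range $1$ from $D$, so $x\in S(R)$, yet $P/xR$ lies in the two maximal ideals sitting over $P$, so $R/xR$ is not a $PM$-ring and $x$ is not a Gelfand element by Theorem \ref{t1}. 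This destroys the case ``$P\cap S\neq\emptyset$'' in your hard direction --- exactly the direction you yourself flag as unresolved: the passage from a Gelfand element of $R_S$ to an element $a+bt'$ that is Gelfand in $R$ is never actually carried out, and the tool you propose for it does not exist.

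The paper avoids Gelfand elements entirely in this theorem. It works directly with a matrix $\begin{pmatrix} a & 0\\ b & c\end{pmatrix}$, $aR+bR+cR=R$, uses the hypothesis on $R_S$ together with the saturation of $S$ (Proposition \ref{p5}) to replace it by an equivalent matrix whose diagonal entry $z$ lies in $S$, and then invokes McGovern's theorem: if $R/zR$ has stable range $1$ and $xR+yR+zR=R$, then $zR+(x+y\lambda)R=R$ for some $\lambda$, which is precisely Kaplansky's criterion for diagonal reduction. The only property of elements of $S$ that is needed is this stable-range-$1$ lifting, which is strictly weaker than Gelfand-ness and, unlike it, actually holds. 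To salvage your approach you would have to replace ``every $s\in S$ is Gelfand'' by that lifting property, at which point you would essentially be reproducing the paper's matrix argument.
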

\begin{proof}
By \cite{{Kaplansky},{LLS}} it is sufficient to prove the statement
for any matrix $ A=\begin{pmatrix}
a & 0 \\
b &c
\end{pmatrix}
$, where $aR+bR+cR=R$. If $R_S$ is an elementary divisor ring then
for the elements $a,b,c\in R_S\cap R$, one can find elements
$ps^{-1}, qs^{-1}\in R_S$, $s\in S$, such that
$$
aps^{-1}R_S+(bps^{-1}+cqs^{-1})R_S=R_S.
$$
Then one can find elements $r,k\in R$ such that $apr+(bp+cq)k\in S$.
Since $S$ is a saturated set, we see that the matrix $A$ has an
equivalent matrix of the form $ B=\begin{pmatrix}
z & 0 \\
x &y
\end{pmatrix}
$, where $xR+yR+zR=R$ and $z\in S$. Since $z\in S$, by \cite{MG07}
we have $zR+(x+y\lambda)R=R$ for some element $\lambda\in R$. By
\cite{Kaplansky}, matrix $B$ and hence the matrix $A$ admits a
diagonal reduction. Therefore, $R$ is an elementary divisor ring.

Conversely, assume that $R$ is an elementary divisor ring. Assume
that we have arbitrary elements $as^{-1}$, $bs^{-1}$ and $cs^{-1}$
from $R_S$ such that $ as^{-1}R_S+(bs^{-1}+cs^{-1})R_S=R_S$. Then
$aR+bR+cR=dR$ for some element $d\in S(R)$. Let $a=a_{1}d$,
$b=b_{1}d$, $c=c_{1}d$ for some elements $a_{1},b_{1}, c_{1}\in R$
such that $a_{1}R+b_{1}R+c_{1}R=R$. Since $R$ is an elementary
divisor ring, by \cite{Kaplansky} there exist elements $p,q\in R$
such that $a_1pR+(b_1p+c_1q)R=R$. Then
$$
aps^{-1}R_S+(bps^{-1}+cqs^{-1})R_S =R_S.$$ By \cite{Kaplansky}, $R$
is an elementary divisor ring.
\end{proof}

Let $R$ be a commutative Bezout domain and let $S(R)$ be the set of
all its elements of almost stable range 1. Since $S(R)$ is a
saturated multiplicatively closed set, by applying the transfinite
induction, we construct a chain $\{R^{\alpha} |  \alpha \text{ is
ordinal }\}$ of saturated multiplicatively closed sets in the domain
$R$ as follows:

We set $R^0=S(R)$. Let $\alpha$ be an ordinal greater than zero and
assume that $R^{\beta}$ are already constructed and is a saturated
multiplicatively closed sets in $R$ for each $\beta$ such that
$\beta< \alpha$ and $K_{\beta}=R_{R^{\beta}}$. Obviously $K_{\beta}$
is a commutative Bezout domain and let $S(K_{\beta})$ be the set all
elements of almost stable range 1 of $K_{\beta}$. By Proposition
\ref{p5}, $S(K_{\beta})$ is a saturated multiplicatively closed set.
We define $R^{\alpha}$ as
$$R^{\alpha}=\bigcup\limits_{\beta< \alpha}R^{\beta},$$
if $\alpha$ is a limit ordinal, and

$$R^{\alpha}=S(K_{\alpha-1})\bigcap R$$
if $\alpha$ is not a limit ordinal. It is clear that $R^{\alpha}$ is
a saturated multiplicatively closed set and if $\alpha$ and $\beta$
are ordinals such that $\alpha \leq \beta$, then $R^{\alpha}\subset
R^{\beta}\subset R$. In addition, $R^{\alpha}=R^{\alpha+1}$ for some
ordinal $\alpha$. In the case when $R^{\alpha}\neq R^{\alpha+1}$ for
each ordinal $\alpha$, we have $card(R^{\alpha})>card(\alpha)$. If
we choose $\beta$ such that $card(\beta)>card(R)$, then we obtain
$card(\beta)>card(R)>card(R^{\beta})$, which is a contradiction. Now
let $\alpha_{0}$ be the least ordinal with the property
$\{R^{\alpha}  |  0\leq\alpha\leq\alpha_{0}\}$ is a $D$ -- chain in
$R$.

By analyzing Theorem \ref{t6} and the $D$ -- chain of the
commutative Bezout domain, we give an answer the question whether
the fact that a commutative Bezout domain is an elementary divisor
ring is equivalent to the case of a commutative Bezout domain with
trivial (unit) elements of almost stable range 1, i. e. units and
only units are elements of almost stable range 1. Let $R$ be a
commutative Bezout domain in which any element of almost stable
range 1 is a unit, i. e. $U(R)=S$. We formulate the following
question: when $R$ is an elementary divisor ring? By \cite{Zab22},
$R$ is an elementary divisor ring iff for any $a,b\in R$ such that
$aR+bR=R$ there exist an element $t\in R$ such that $a+bt$ is an
avoidable element, i. e. $R/(a+bt)R$ is a clean ring. Since every
clean ring is a ring of idempotent stable range 1, $a+bt$ is an
element of almost stable range 1. Since $U(R)=S(R)$, we see that
$a+bt$ is unit element, i. e. $R$ is a ring of stable range 1. Since
any element of stable range 1 is an element of almost stable range
1, we obtain the following result.

\begin{theorem}\label{t7}
Let $R$ be a commutative Bezout domain such that $U(R)=S(R)$. Then
$R$ is an elementary divisor ring if and only if $R$ is a field.
\end{theorem}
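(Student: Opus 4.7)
The plan is to follow the route sketched in the paragraph just before the statement and make the chain of implications precise. The easy direction is the converse: a field $R$ trivially has stable range $1$ and is therefore an elementary divisor ring, while for every nonzero $a$ one has $R/aR=0$ (which has stable range $1$), so the hypothesis $U(R)=S(R)$ is satisfied automatically on nonzero elements. Thus I would dispose of this implication in one sentence.

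For the forward direction, I would assume that $R$ is an elementary divisor domain satisfying $U(R)=S(R)$. The first step is to invoke the characterisation of elementary divisor domains from \cite{Zab22}: for every coprime pair $a,b\in R$ with $aR+bR=R$ there exists $t\in R$ such that $a+bt$ is an avoidable element, and an element $c\in R$ is avoidable exactly when the factor ring $R/cR$ is clean. This replaces the structural hypothesis by a concrete combinatorial statement about individual elements.

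The second step is to run through the chain
\[
\text{clean}\ \Longrightarrow\ \text{idempotent stable range } 1\ \Longrightarrow\ \text{stable range } 1.
\]
This shows that $R/(a+bt)R$ has stable range $1$, i.e.\ $a+bt\in S(R)$, and the assumption $U(R)=S(R)$ then forces $a+bt$ to be a unit. Since this was obtained for an arbitrary coprime pair $(a,b)$, the ring $R$ itself has stable range $1$.

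The third step is to observe that stable range $1$ descends to every factor ring: for each nonzero $x\in R$ the quotient $R/xR$ again has stable range $1$, so $x\in S(R)=U(R)$. Hence every nonzero element of $R$ is a unit, so $R$ is a field. The main point to verify carefully is the second step, where two black-box facts are used (clean implies idempotent stable range $1$, and idempotent stable range $1$ implies stable range $1$); both are standard in this literature and should reduce to citations to the works already invoked in the paper. The small bookkeeping question of whether $0$ is declared to belong to $S(R)$ is harmless, since only nonzero elements enter the argument.
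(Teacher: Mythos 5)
Your proof is correct and follows essentially the same route as the paper: reduce via \cite{Zab22} to the existence of an avoidable $a+bt$ with $R/(a+bt)R$ clean, pass through idempotent stable range $1$ to conclude $a+bt\in S(R)=U(R)$, deduce that $R$ has stable range $1$, and then use that stable range $1$ passes to all factor rings to force every nonzero element into $S(R)=U(R)$. Your last step merely makes explicit what the paper compresses into the sentence ``any element of stable range 1 is an element of almost stable range 1.''
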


Hence the problem "is every commutative Bezout domain an elementary
divisor ring?" is equivalent to the problem does every commutative
Bezout domain contain a non -- unit element of almost stable range
1.


\begin{thebibliography}{5}

\bibitem {HC01}  Chen H. ~{\it Rings related stable range conditions}//~Series in
Algebra, 11 World Scientic, Haickensac, №1 (2011).
--- 2001.
 --- {\bf 21}
--- P. 209 -- 216.

\bibitem{Gillman} Gillman L., Henriksen M. {\it Some remarks about elementary divisor rings}// Trans. Amer. Math. Soc. -- 1956 -- {\bf 82} -- P. 362 -- 365.
\bibitem{Henr}
{\it Henriksen M.} Some remarks about elementary divsor rings,
Michigan Math. J., {\bf 3}, 1955-1956, pp. 159-163.
\bibitem{Kaplansky} Kaplansky I. {\it Elementary divisor ring and modules}// Trans. Amer. Math. Soc.
---
1949. --- {\bf 66} --- P. 464 -- 491.
\bibitem{LLS} {\it Larsen M., Levis W., Shores T.} Elementary divisor rings and finitely
presented  modules, Trans. Amer. Math. Soc., 1974, v.{\bf 187}, pp.
231 -- 248.
\bibitem {Shubani}  Shubani M., Chen H. ~{\it Elementary matrix reduction over certain rings}//~arXiv. org, math arXiv 1504.
04875.

\bibitem {MG07}  McGovern W.~ {\it Bezout rings with almost
    stable range 1 are elementary divisor rings} / W. McGovern //~J. Pure and
    Appl. Algebra. --- 2007.
--- {\bf 212}
--- P. 340--348.
\bibitem{McG}
{\it McGovern W. Wm.} Neat ring, J. of Pure and Appl. Algebra, 2006,
{\bf 205}, pp. 243 -- 265.

\bibitem{Zab21} Zabavsky B. V., Kuznitska B.~ M. {\it Avoidable rings}// Math. Stud.
{\bf43}(2015). --- pp. 153--155.
\bibitem{Zab22} Zabavsky B. V. {\it Diagonal reduction of matrices over finite stable range
rings}/ B.~V.~Zabavsky~ // Mat. Stud. -- 2014 -- {\bf 41},  pp. 101
-- 108.
\bibitem{Zabavsk}
{\it Zabavsky B.V.} Diagonal reduction of matrices over rings,
Mathematical Studies, Monograph Series, volume XVI, VNTL Publishers,
2012, Lviv, pp 251.
\bibitem{ZabMatSud} Zabavsky B. V. {\it Questions related to the K -- theoretical aspect of Bezout rings with various stable range conditions}/ B.~V.~Zabavsky //
Math. Stud. --- 2014. --- {\bf 42} --- №1, pp. 89 -- 103.
\bibitem{Luhansk} Zabavsky B. V., Gatalevych A. I. {\it A commutative Bezout $PM^{*}$ domain is an elementary divisor ring}// Algebra and Discrete
Math.
--- 2015 --- {\bf 19} --- №2,  pp. 295 -- 301.
\bibitem{Carp} Zabavsky B. V., Pihura O. V. {\it A Gelfand local Bezout domains are an elementary divisor rings}// Carpathian Math. Publ. --- 2015
(unpublisher).

















\end{thebibliography}
\end{document}